\theoremstyle{plain}
\newtheorem{theoremalph}{Theorem}
\newtheorem{lemma}{Lemma}
\newtheorem*{theorem*}{Theorem}
\newtheorem*{lemma*}{Lemma}
\theoremstyle{definition}
\newtheorem{definition}[lemma]{Definition}
\newtheorem{remark}[lemma]{Remark}
\setlist[enumerate]{align=left}
\title{On the eigenvalues of Erd\"os--R\'enyi random bipartite graphs}
\author{Calum J. Ashcroft}
\date{\vspace{-2em}}
\begin{document}
\maketitle
	\begin{abstract}
We analyse the eigenvalues of Erd\"os--R\'enyi random bipartite graphs. In particular, we consider $p$ satisfying $n_{1}p=\Omega(\sqrt{n_{1}p}\log^{3}(n_{1})),$ $n_{2}p=\Omega(\sqrt{n_{2}p}\log^{3}(n_{2})),$ and let $G\sim G(n_{1},n_{2},p)$. We show that with probability tending to $1$ as $n_{1}$ tends to infinity: $$\mu_{2} (A(G))\leq 2[1+o(1)](\sqrt{n_{1}p}+\sqrt{n_{2}p}+\sqrt{(n_{1}+n_{2})p}).$$
	\end{abstract}


\section{Introduction}

\subsection{Statement of results}
It is well known that many models of random graphs are expanders. In particular, if $A$ is the adjacency matrix of a graph $G$ on $n$ vertices, we define the ordering of the eigenvalues of $A$ by $\mu_{1}(A)\geq \mu_{2}(A)\geq \hdots\geq \mu_{n}(A)$ (we keep this ordering convention for the eigenvalues of any symmetric matrix). We further define $\mu (A)=\max\{\vert\mu_{2}(A)\vert,\vert\mu_{n}(A)\vert\}.$ We are concerned with how large $\mu (A)$ can be, relative to the average degree of $G$. A set of strongly related quantities are the eigenvalues $\mu_{i}(G):=\mu_{i}(I-D^{-1\slash 2}AD^{-1\slash 2})$, where $D$ is the degree matrix of $G$.

The Alon--Boppana bound states that for a $d$-regular graph, $\mu (A)\geq 2\sqrt{d-1}\\-o(1)$ \cite{Alon}. A major result due to Friedman is that random $d$-regular graphs are almost Ramanujan, i.e. for any $\epsilon>0$, $\mu (A)\leq 2\sqrt{d-1}+\epsilon$ with probability tending to $1$ as $n$ tends to infinity \cite{Friedman}, while Bordenave proved that $\mu (A)\leq 2\sqrt{d-1}+o(1)$ \cite{bordenave2015new}.

We note that for $p$ sufficiently large, and $G\sim G(n,p)$ the Erd\"os--R\'enyi random graph, $G$ is almost $np$-regular with probability tending to $1$ as $n$ tends to infinity. Furthermore, such a graph also satisfies $\mu (A)\leq 2[1+o(1)]\sqrt{np}$ \cite{furedikomlos}: these results were then extended to a more general model of random graphs with given expected degree sequence by \cite{chungrandomgraph}.

Switching to random bipartite graphs, since the eigenvalues of bipartite graphs are symmetric around zero, we need only consider $\mu_{2}(A)$. The analogue of the Alon--Boppana bound for a $(d_{L},d_{R})$-regular bipartite graph is\\
$\mu_{2} (A)\geq \sqrt{d_{L}-1}+\sqrt{d_{R}-1}-\epsilon$ for $\epsilon>0$ and the number of vertices sufficiently large \cite{Feng_Li,Li_Sole}. Furthermore, the bound is almost attained for random $(d_{L},d_{R})$-regular graphs: with probability tending to $1$ as the number, $n$, of vertices tends to infinity, for sequences $\epsilon_{n},\epsilon_{n}'$ and for $G$ a random $(d_{L},d_{R})$-regular graph, $\mu_{2}(A)\leq \sqrt{d_{L}-1}+\sqrt{d_{R}-1}+\epsilon_{n}$, and $$\mu_{+}(A)=\min_{i} \{\mu_{i}(A)>0\}\geq \sqrt{d_{L}-1}+\sqrt{d_{R}-1}-\epsilon'_{n}\; \cite{brito2018spectral}.$$

There are many other results concerning the eigenvalues of random bipartite graphs: both \cite{Dumitriu_Johnson} and \cite{tranrandombipartite} study the spectral distribution of random biregular bipartite graphs, and show it converges to certain laws when $\vert V_{1}\vert \slash \vert V_{2}\vert$ tends to a limit $\alpha\neq 0,\infty$, though each of the two considers a different range of $(d_{L},d_{R})$. 
Finally, the second eigenvalue of the matrix $D-A$ is considered for certain random biregular bipartite graphs in \cite{zhu2020second}.

For $n_{2}=n_{2}(n_{1})\geq n_{1}$, and $0\leq p=p(n_{1})\leq 1$, we define the Erd\"os--R\'enyi random bipartite graph $G(n_{1},n_{2},p)$ as the graph with vertex partition $V_{1}=\{u_{1},\hdots ,u_{n_{1}}\}$ and $V_{2}=\{v_{1},\hdots ,v_{n_{2}}\}$, and edge set obtained by adding each edge $(u_{i},v_{j})$ independently with probability $p$. We write $G\sim G(n_{1},n_{2},p)$ to indicate that the graph $G$ is obtained by this process.

To our knowledge, there are no results in the literature on the eigenvalues of $G(n_{1},n_{2},p)$: while we do not show such graphs almost attain the Alon-Boppana bound, we are still able to prove that they are within a multiplicative constant of this bound, and so $\mu_{i}(G)$ is close to $1$ for any $i\neq 1,n_{1}+n_{2}.$ 
\begin{theoremalph}\label{mainthm: eigenvalue of random bipartite graphs}
Let $n_{1}\geq 1$, $n_{2}=n_{2}(n_{1})$, and $p=p(n_{1})$ be such that $$n_{1}p=\Omega(\sqrt{n_{1}p}\log^{3}(n_{1})),\mbox{ and }n_{2}p=\Omega(\sqrt{n_{2}p}\log^{3}(n_{2})).$$ Let $G\sim G(n_{1},n_{2},p).$ Then with probability tending to $1$ as $n_{1}$ tends to infinity: $$\mu_{2} (A(G))\leq 2[1+o(1)]\bigg(\sqrt{(n_{1}+n_{2})p}+\sqrt{n_{1}p}+\sqrt{n_{2}p}\bigg),$$
and so with probability tending to $1$ as $n_{1}$ tends to infinity:
$$\max\limits_{i\neq 1,n_{1}+n_{2}}\bigg\vert\mu_{i}(G)-1\bigg\vert\leq 2[1+o(1)]\bigg(\sqrt{\frac{1}{n_{1}p}+\frac{1}{n_{2}p}}+\frac{1}{\sqrt{n_{1}p}}+\frac{1}{\sqrt{n_{2}p}}\bigg).$$
\end{theoremalph}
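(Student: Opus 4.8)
The plan is to reduce the eigenvalue bound to an operator-norm estimate for a centred random matrix, to prove that estimate by the trace (moment) method, and then to obtain the normalised statement by rescaling.

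First I would use the block form $A=\begin{pmatrix}0 & B\\ B^{\mathsf T} & 0\end{pmatrix}$, where $B$ is the $n_{1}\times n_{2}$ biadjacency matrix with independent $\mathrm{Bernoulli}(p)$ entries. The nonzero eigenvalues of $A$ are exactly $\pm\sigma_{i}(B)$, so $\mu_{2}(A(G))=\sigma_{2}(B)$. Writing $B=pJ+E$, where $J$ is the all-ones matrix and $E$ has independent entries of mean $0$ and variance $p(1-p)$, the mean part $pJ$ has rank one. Weyl's inequality for singular values, $\sigma_{i+j-1}(X+Y)\le\sigma_{i}(X)+\sigma_{j}(Y)$, applied with $i=1$, $j=2$, $X=E$, $Y=pJ$, gives $\sigma_{2}(B)\le\sigma_{1}(E)+\sigma_{2}(pJ)=\|E\|_{\mathrm{op}}$. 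It therefore suffices to prove $\|E\|_{\mathrm{op}}\le 2[1+o(1)]\big(\sqrt{(n_{1}+n_{2})p}+\sqrt{n_{1}p}+\sqrt{n_{2}p}\big)$ with probability tending to $1$.

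To bound $\|E\|_{\mathrm{op}}$ I would symmetrise, setting $\widetilde E=\begin{pmatrix}0 & E\\ E^{\mathsf T} & 0\end{pmatrix}$ on $N=n_{1}+n_{2}$ vertices, so that $\|\widetilde E\|_{\mathrm{op}}=\|E\|_{\mathrm{op}}$ and, for even $2k$, $\|\widetilde E\|_{\mathrm{op}}^{2k}\le\operatorname{tr}(\widetilde E^{2k})$. Taking expectations, $\mathbb E[\operatorname{tr}(\widetilde E^{2k})]$ is a sum over closed length-$2k$ walks alternating between $V_{1}$ and $V_{2}$; by independence and mean zero only walks whose every edge is traversed at least twice survive, and each edge contributes $\mathbb E[E_{ij}^{s}]\le p$. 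Grouping walks by shape and by the numbers $a,b$ of distinct vertices visited in $V_{1},V_{2}$, the dominant (doubled-tree) terms have the form $(\#\,\text{bipartite plane trees})\cdot n_{1}^{a}n_{2}^{b}p^{k}$. A sharp count would give the optimal $\sqrt{n_{1}p}+\sqrt{n_{2}p}$; a more robust bookkeeping that also absorbs the non-tree walks yields $\mathbb E[\operatorname{tr}(\widetilde E^{2k})]\le N\big(2[1+o(1)](\sqrt{(n_{1}+n_{2})p}+\sqrt{n_{1}p}+\sqrt{n_{2}p})\big)^{2k}$, from which Markov's inequality with $k\to\infty$ slowly gives the first claimed bound.

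The hard part will be controlling $\mathbb E[\operatorname{tr}(\widetilde E^{2k})]$ in the moderately sparse regime, where the classical Füredi–Komlós count breaks down: walks that reuse edges many times, and walks passing through atypically high-degree vertices, can no longer simply be discarded. I would control these as in the sparse refinements of the moment method, bounding the contribution of walks of a given excess (edges minus distinct tree-edges) and of a given maximal vertex multiplicity, and showing each is of lower order. This is exactly where the hypotheses $n_{i}p=\Omega(\sqrt{n_{i}p}\log^{3} n_{i})$, equivalently $n_{i}p=\Omega(\log^{6} n_{i})$, are used: they permit the choice $k\asymp\log n_{1}$ while still absorbing every error term into the factor $1+o(1)$, at the cost of the non-sharp constant and the extra $\sqrt{(n_{1}+n_{2})p}$ term. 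I expect this walk-counting estimate to be the technical core of the argument.

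Finally I would deduce the normalised inequality. Each $u\in V_{1}$ has degree $\mathrm{Bin}(n_{2},p)$ and each $v\in V_{2}$ degree $\mathrm{Bin}(n_{1},p)$; since $n_{1}p,n_{2}p=\Omega(\log^{6})$, a Chernoff bound and a union bound over all $N$ vertices show that with probability tending to $1$ every degree lies within a factor $1+o(1)$ of its mean. Hence $D$ equals, up to $(1+o(1))$ scalar factors on the two blocks, $\mathrm{diag}(n_{2}p\,I_{n_{1}},\,n_{1}p\,I_{n_{2}})$, so $D^{-1/2}AD^{-1/2}$ agrees with $\tfrac{1}{\sqrt{n_{1}n_{2}}\,p}A$ up to a $(1+o(1))$ multiplicative perturbation. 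Since the eigenvalues of $I-D^{-1/2}AD^{-1/2}$ are $1$ minus those of the normalised adjacency, whose extreme eigenvalues $\pm1$ are precisely the excluded indices $i=1,N$, this gives $\max_{i\ne1,N}\lvert\mu_{i}(G)-1\rvert=[1+o(1)]\,\mu_{2}(A(G))/(\sqrt{n_{1}n_{2}}\,p)$. Substituting the first bound and using $\sqrt{(n_{1}+n_{2})p}/(\sqrt{n_{1}n_{2}}\,p)=\sqrt{\tfrac{1}{n_{1}p}+\tfrac{1}{n_{2}p}}$ and $\sqrt{n_{i}p}/(\sqrt{n_{1}n_{2}}\,p)=1/\sqrt{n_{j}p}$ (for $\{i,j\}=\{1,2\}$) then yields the second displayed inequality.
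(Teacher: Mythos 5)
Your route is genuinely different from the paper's. The paper proves the first inequality with no moment computations at all: it couples $G$ to a full Erd\"os--R\'enyi graph $G'\sim G(n_{1}+n_{2},p)$ by filling in the two diagonal blocks with $G(n_{1},p)$ and $G(n_{2},p)$ graphs, writes $A=A(G')-A_{1}'-A_{3}'$, and applies Weyl's inequality together with three invocations of the known square-case bound $\mu(A(G(n,p)))=2[1+o(1)]\sqrt{np}$ (the paper's Lemma 1, quoted from F\"uredi--Koml\'os); this is also why the stated bound has its odd three-term shape --- one term per invocation. You instead pass to the centred biadjacency matrix: the identification $\mu_{2}(A)=\sigma_{2}(B)$ and the rank-one removal $\sigma_{2}(B)=\sigma_{2}(E+pJ)\le\sigma_{1}(E)+\sigma_{2}(pJ)=\Vert E\Vert$ are correct and clean, and then you propose to bound $\Vert E\Vert$ by the trace method. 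That amounts to a from-scratch proof of the bipartite analogue of Lemma 1. If executed, your route actually buys a stronger theorem: the sharp walk count gives $(1+o(1))(\sqrt{n_{1}p}+\sqrt{n_{2}p})$, essentially matching the bipartite Alon--Boppana bound, and you then have to deliberately inflate it to match the stated right-hand side. Your treatment of the normalized inequality is the same as the paper's (uniform degree concentration plus an $o(1/\sqrt{n_{1}n_{2}p^{2}})$ perturbation of the rescaled adjacency matrix), with Chernoff in place of the paper's Chebyshev --- if anything the more appropriate tool --- and your final substitution algebra is right.

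The caveat is that your third paragraph is a plan rather than a proof. The estimate $\mathbb{E}[\mathrm{tr}(\widetilde{E}^{2k})]\le N\big(2[1+o(1)](\cdots)\big)^{2k}$ with $k\asymp\log n_{1}$ \emph{is} a F\"uredi--Koml\'os/Vu-type theorem: the excess-edge bookkeeping and the control of high-multiplicity vertices that you defer to ``sparse refinements of the moment method'' constitute essentially all of the work, and carrying them out in the regime $n_{i}p=\Omega(\log^{6}n_{i})$ is a substantial (if well-trodden) undertaking, not a routine verification. The paper's whole point is that this can be sidestepped: quoting the square-case result and decomposing via Weyl costs only constant factors. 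So either cite an off-the-shelf norm bound for sparse rectangular random matrices (such results exist in the literature), which would make your argument complete and shorter than sketched, or be prepared to supply several pages of walk counting. One further wrinkle, which you share with the paper rather than introduce: uniform concentration of the degrees of all $n_{2}$ vertices of $V_{2}$ (each distributed as a binomial with mean $n_{1}p$) requires roughly $n_{1}p\gg\log n_{2}$, which does not follow from the stated hypotheses when $n_{2}$ grows extremely fast relative to $n_{1}$; neither your Chernoff-plus-union-bound step nor the paper's Chebyshev step closes this.
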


\subsection{Notation and definitions}
We now briefly discuss some notation and definitions. We sometimes arrive at situations where $n$ is some parameter tending to infinity that is required to be an integer: if $n$ is not integer, we will implicitly replace it by $\lfloor n\rfloor$. Since we are dealing with asymptotics, this does not affect any of our arguments. We now define the following notation.
\begin{definition}
   Let $f,g:\mathbb{N}\rightarrow \mathbb{R}_{+}$ be two functions. We write $f=o(g)$ if $f(n)\slash g(n)\rightarrow 0$ as $n\rightarrow\infty$, and $f=\Omega (g)$ if $g=o(f)$.
\end{definition}

Let $G=(V,E)$ be a graph with vertex set $V=\{x_{1},\hdots ,x_{n}\}$. Note that the vertex partition of a bipartite graph $G$ will always be written $V(G)=V_{1}(G)\sqcup V_{2}(G)$.

The \emph{adjacency matrix} of $G$, $A(G)$, is the $n\times n$ matrix with $A(G)_{i,j}$ defined to be the number of edges between $x_{i}$ and $x_{j}$. The \emph{degree matrix} of $G$, $D(G)$, is the diagonal matrix with entries $D(G)_{i,i}=deg(x_{i})$. 

We note the following lemma, commonly known as Weyl's inequality, which will be of great use. 

\begin{lemma*}[Weyl's inequality]
Let $A$ and $B$ be symmetric $n\times n$ real matrices. For $i=1,\hdots ,n$: $\mu_{i}(A)+\mu_{n}(B)\leq \mu_{i}(A+B)\leq \mu_{i}(A)+\mu_{1}(B).$
\end{lemma*}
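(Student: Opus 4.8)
The plan is to prove both inequalities from the Courant--Fischer variational characterization of eigenvalues, which I take as the single ingredient. Recall that for a symmetric $n\times n$ real matrix $M$ with eigenvalues ordered as in the paper, $\mu_{1}(M)\geq\cdots\geq\mu_{n}(M)$, the Rayleigh quotient $R_{M}(x)=\frac{x^{T}Mx}{x^{T}x}$ satisfies the two dual identities
$$\mu_{i}(M)=\max_{\dim S=i}\ \min_{0\neq x\in S}R_{M}(x)=\min_{\dim S=n-i+1}\ \max_{0\neq x\in S}R_{M}(x),$$
where $S$ ranges over subspaces of $\mathbb{R}^{n}$ of the indicated dimension. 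With this in hand, everything reduces to the pointwise additivity bound
$$R_{A}(x)+\mu_{n}(B)\leq R_{A+B}(x)\leq R_{A}(x)+\mu_{1}(B),$$
which holds for every $x\neq 0$ because $R_{A+B}(x)=R_{A}(x)+R_{B}(x)$ and $\mu_{n}(B)\leq R_{B}(x)\leq\mu_{1}(B)$.

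For the upper bound I would use the min-max form. Fixing $i$ and letting $S$ range over $(n-i+1)$-dimensional subspaces, the pointwise bound gives $R_{A+B}(x)\leq R_{A}(x)+\mu_{1}(B)$ for every $0\neq x\in S$, so maximizing over $x\in S$ yields $\max_{x\in S}R_{A+B}(x)\leq\max_{x\in S}R_{A}(x)+\mu_{1}(B)$. Taking the minimum over all such $S$ and applying Courant--Fischer to both sides gives $\mu_{i}(A+B)\leq\mu_{i}(A)+\mu_{1}(B)$.

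For the lower bound there are two routes, and I would present the symmetric one. The direct route uses the max-min form: for an $i$-dimensional subspace $S$ the pointwise bound gives $\min_{x\in S}R_{A+B}(x)\geq\min_{x\in S}R_{A}(x)+\mu_{n}(B)$, and maximizing over $S$ yields $\mu_{i}(A+B)\geq\mu_{i}(A)+\mu_{n}(B)$. Alternatively, the lower bound is formally equivalent to the upper one: applying the just-proved upper inequality to the pair $(A+B,-B)$ gives $\mu_{i}(A)\leq\mu_{i}(A+B)+\mu_{1}(-B)$, and since the eigenvalues of $-B$ are the negatives of those of $B$ we have $\mu_{1}(-B)=-\mu_{n}(B)$, which rearranges to exactly $\mu_{i}(A)+\mu_{n}(B)\leq\mu_{i}(A+B)$. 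I would record the first route for the proof and note the second as a remark.

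The argument carries no real obstacle beyond correctly matching the eigenvalue index to the subspace dimension in the two dual forms: with the decreasing convention $\mu_{1}\geq\cdots\geq\mu_{n}$ used throughout, the $i$-th eigenvalue is simultaneously a max-min over $i$-dimensional subspaces and a min-max over $(n-i+1)$-dimensional subspaces, and the only care needed is to pick the form that converts the pointwise additivity bound into the desired one-sided estimate after the outer min/max. If Courant--Fischer itself is not assumed known, the one genuinely substantive step would be to establish it by the standard dimension-counting argument: working in the eigenbasis of $M$, any $i$-dimensional subspace $S$ meets the $(n-i+1)$-dimensional span $W$ of the eigenvectors for $\mu_{i},\dots,\mu_{n}$ (since $\dim S+\dim W=n+1$), and any $0\neq x\in S\cap W$ has $R_{M}(x)\leq\mu_{i}$, capping the inner minimum; meanwhile the span of the top $i$ eigenvectors attains the bound, giving the max-min identity, and the min-max identity follows dually.
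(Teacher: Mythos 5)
The paper does not prove this lemma at all: Weyl's inequality is stated as a known fact and used as a black box in the proof of Theorem A, so there is no internal argument to compare yours against --- your proposal in effect supplies the missing proof, and it is correct. The Courant--Fischer characterization is quoted with the index conventions correctly matched to the paper's decreasing ordering ($i$-dimensional subspaces for the max-min form, $(n-i+1)$-dimensional for the min-max form), the pointwise bound $\mu_{n}(B)\leq R_{B}(x)\leq \mu_{1}(B)$ is exactly what is needed for both outer optimizations to go through, and both of your derivations of the lower bound are valid. Your second route --- applying the already-proved upper bound to the pair $(A+B,-B)$ together with $\mu_{1}(-B)=-\mu_{n}(B)$ --- dovetails nicely with the paper, since it uses precisely the identity $\mu_{i}(-M)=-\mu_{n+1-i}(M)$ that the paper records in the Remark immediately following the lemma. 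The closing sketch of Courant--Fischer by dimension counting is also sound ($\dim S+\dim W=n+1$ forces a nontrivial intersection, and the span of the top $i$ eigenvectors attains the bound), so your argument is self-contained even if the variational characterization is not taken as known.
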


We also make the following remark. 
\begin{remark}
Let $M$ be a symmetric $n\times n$ matrix. For $i=1,\hdots , n$: $$\mu_{i}(-M)=-\mu_{n+1-i}(A).$$ This follows as $\{\mu_{i}(-M)\;:\;1\leq i\leq n\}=\{-\mu_{i}(M)\;:\;1\leq i\leq n\}$
and $\mu_{1}(A)\geq \mu_{2}(A)\geq \hdots\geq \mu_{m}(A),$ so that $-\mu_{1}(A)\leq -\mu_{2}(A)\leq \hdots\leq -\mu_{m}(A).$
\end{remark}
\section*{Acknowledgements}
As always, I would like to thank my supervisor Henry Wilton for his guidance and support. 
\section{The spectra of Erd\"os--R\'enyi random bipartite graphs}\label{sec: spectral theory of random bipartite graphs}
In this section we now analyse the spectra of Erd\"os--R\'enyi random bipartite graphs. We use the following result from \cite{furedikomlos} (c.f. \cite{chungrandomgraph}).

\begin{lemma}\label{lem: eigenvalue of ER random graph}
Let $n\geq 1$, let $p$ be such that $np=\Omega(\sqrt{np}\log^{3}(n))$, and let $G\sim G(n,p)$. Then with probability tending to $1$ as $n$ tends to infinity,
$\mu(A(G)) = 2[1+o(1)] \sqrt{np}$.
\end{lemma}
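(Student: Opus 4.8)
The plan is to split $A=A(G)$ into its mean and a centered part, reduce the claim to a sharp operator-norm bound on the centered part, and then control that norm by the moment (trace) method.

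First I would write $A=\mathbb{E}[A]+B$, where $\mathbb{E}[A]=p(J-I)$ with $J$ the all-ones matrix, and $B$ is the symmetric matrix of independent, mean-zero entries $B_{ij}=A_{ij}-p$ (for $i<j$), each of variance $\sigma^{2}=p(1-p)$. The deterministic part $p(J-I)$ has spectrum consisting of $p(n-1)$ together with $-p$ of multiplicity $n-1$. Writing $\|B\|=\max\{\mu_{1}(B),-\mu_{n}(B)\}$ for the operator norm, Weyl's inequality applied to $A=\mathbb{E}[A]+B$ gives $\mu_{2}(A)\le\mu_{2}(\mathbb{E}[A])+\mu_{1}(B)=-p+\mu_{1}(B)$ and $\mu_{n}(A)\ge\mu_{n}(\mathbb{E}[A])+\mu_{n}(B)=-p+\mu_{n}(B)$, so that $\mu(A)=\max\{|\mu_{2}(A)|,|\mu_{n}(A)|\}\le\|B\|+p$. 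Since $p=o(\sqrt{np})$, it is enough to prove $\|B\|\le 2[1+o(1)]\sqrt{np}$, together with a matching lower estimate for the equality.

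To bound $\|B\|$ I would estimate the even spectral moments. For an even power $2k$ one has $\mu_{1}(B)^{2k}\le\mathrm{tr}(B^{2k})=\sum B_{i_{1}i_{2}}B_{i_{2}i_{3}}\cdots B_{i_{2k}i_{1}}$, the sum ranging over closed walks of length $2k$ on the vertex set. Taking expectations, independence and $\mathbb{E}[B_{ij}]=0$ kill every walk that uses some edge exactly once, so only walks in which each edge appears at least twice contribute. The leading term comes from walks that use each of their $k$ distinct edges exactly twice and whose edges form a tree on $k+1$ distinct vertices; these are counted by the Catalan number $C_{k}$ and contribute $[1+o(1)]\,C_{k}\,n^{k+1}\sigma^{2k}$. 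As $C_{k}\sim 4^{k}/(\sqrt{\pi}k^{3/2})$, this leading term equals $n\,(4n\sigma^{2})^{k}$ up to a polynomial-in-$k$ factor.

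The crux, and the step I expect to be hardest, is bounding the remaining walks: those reusing an edge three or more times, or visiting fewer than $k+1$ distinct vertices. Each such defect gains a factor of order $1/(n\sigma^{2})=1/(np(1-p))$ over the main term but is weighted by combinatorial factors growing with $k$, so the total error is governed by a ratio of the form $k^{c}/(n\sigma^{2})$ for some constant $c$. The hypothesis $np=\Omega(\sqrt{np}\log^{3}n)$ — equivalently $\log^{3}n=o(\sqrt{np})$ — is precisely the threshold that lets me take $k$ of order $\log n$ while keeping this error $o(1)$ relative to the main term, yielding $\mathbb{E}[\mathrm{tr}(B^{2k})]=[1+o(1)]\,n\,(4n\sigma^{2})^{k}$. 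Markov's inequality then gives, for fixed $\varepsilon>0$ and $t=2(1+\varepsilon)\sqrt{np}$, the bound $\mathbb{P}(\mu_{1}(B)>t)\le\mathbb{E}[\mathrm{tr}(B^{2k})]/t^{2k}\to 0$; applying the same estimate to $-B$ controls $-\mu_{n}(B)$, so $\|B\|\le 2[1+o(1)]\sqrt{np}$ with probability tending to $1$.

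For the asymptotic equality I would read a matching lower bound off the same moment computation: from $\mathrm{tr}(B^{2k})\le n\,\|B\|^{2k}$ and the value of $\mathbb{E}[\mathrm{tr}(B^{2k})]$ one gets $\mathbb{E}[\|B\|^{2k}]\ge[1-o(1)](4n\sigma^{2})^{k}$, and combining this with the upper tail already established (a Paley--Zygmund type sandwiching) forces $\|B\|\ge 2[1-o(1)]\sqrt{np(1-p)}$ with high probability. Since the regime of interest has $p=o(1)$, this matches the upper bound and gives $\mu(A)=2[1+o(1)]\sqrt{np}$. Throughout, the delicate point is the uniform-in-$k$ accounting of the defective walks in the sparse regime, since that is exactly where the hypothesis on $np$ enters to pin down the sharp constant $2$.
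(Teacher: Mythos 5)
A point of comparison first: the paper does not prove this lemma at all --- it is imported verbatim from F\"uredi--Koml\'os (cf.\ Chung et al.), so the only meaningful comparison is with the method of that reference, which is precisely the route you take. Your upper-bound half is that standard argument: center $A=p(J-I)+B$, note $\mu_{2}(p(J-I))=-p$, use Weyl to reduce to $\Vert B\Vert$, expand $\mathbb{E}[\mathrm{tr}(B^{2k})]$, identify the Catalan/tree leading term $C_{k}n^{k+1}\sigma^{2k}$, and apply Markov with $k$ growing logarithmically (for each fixed $\varepsilon$, then diagonalize). That half is sound in outline; the deferred accounting of defective walks is exactly the technical core of F\"uredi--Koml\'os, and the hypothesis $np=\Omega(\sqrt{np}\log^{3}n)$, i.e.\ $np\gg\log^{6}n$, does suffice for the range of $k$ you need.

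The genuine gap is your final paragraph, the matching lower bound. Whatever a Paley--Zygmund sandwich yields, it concerns $\Vert B\Vert=\max\{\mu_{1}(B),-\mu_{n}(B)\}$, and this does \emph{not} transfer to $\mu(A)=\max\{\vert\mu_{2}(A)\vert,\vert\mu_{n}(A)\vert\}$: writing $A+pI=B+pJ$ with $pJ$ rank one and positive semidefinite, interlacing gives only $\mu_{2}(A)\geq\mu_{2}(B)-p$ and $\mu_{n}(A)\leq\mu_{n-1}(B)-p$, so you need lower bounds on the \emph{second}-largest and second-smallest eigenvalues of $B$, which no bound on $\Vert B\Vert$ can supply (a priori all of $\Vert B\Vert$ could sit in the single eigenvalue $\mu_{1}(B)$). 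Worse, a single moment computation cannot both give the upper tail and isolate $\mu_{2}(B)$: Markov requires $(1+\varepsilon)^{2k}\gg n$, while subtracting the two possible outliers $\mu_{1}(B),\mu_{n}(B)$ from $\mathrm{tr}(B^{2k})$ and still seeing the bulk requires $(1+\varepsilon)^{2k}\ll n$. A correct repair uses two scales (trace concentration at a smaller power $k_{2}$, plus the interlacing above) or an appeal to the semicircle law. Finally, you assume $p=o(1)$, but the hypothesis does not imply it (constant $p$ satisfies it); in that regime the true scale is $2\sqrt{np(1-p)}$, which is \emph{not} $2[1+o(1)]\sqrt{np}$, so the equality in the lemma can only be established when $p\to 0$. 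This last point is really a defect of the lemma as stated rather than of your argument --- the paper only ever uses the upper bound, which is the half you prove correctly.
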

We now prove our main theorem.

\begin{proof}[Proof of Theorem \ref{mainthm: eigenvalue of random bipartite graphs}]
Let $G\sim G(n_{1},n_{2},p)$, let $A=A(G)$, and $D=D(G)$. Let $G'$ be the graph obtained by adding to $G$ each (non-loop) edge in $V_{1}^{2}$ with probability $p$ and each (non-loop) edge in $V_{2}^{2}$ with probability $p$. Then $G'\sim G(n_{1}+n_{2},p)$. By assumption, $n_{1}p=\Omega (\sqrt{n_{1}p}\log^{3}n_{1})$ and $n_{2}p=\Omega (\sqrt{n_{2}p}\log^{3}n_{2})$, so that $(n_{1}+n_{2})p=\Omega (\sqrt{(n_{1}+n_{2})p}\log^{3}(n_{1}+n_{2})).$
The adjacency matrix of $G'$ is of the form 
$$A(G')=\begin{pmatrix} 
 A_{1}&A_{2}\\
 A_{2}^{T}&A_{3}
\end{pmatrix},$$
where $A_{1}$ is the adjacency matrix of a $G(n_{1},p)$ graph, $A_{3}$ is the adjacency matrix of a $G(n_{2},p)$ graph,
and the matrix 
$$\begin{pmatrix}
 0&A_{2}\\
 A_{2}^{T}&0
\end{pmatrix}$$
is the adjacency matrix of $G$. By Lemma \ref{lem: eigenvalue of ER random graph}, with probability tending to $1$ as $n_{1}$ tends to infinity, $\max\limits_{i\neq 1}\vert \mu_{i}(A(G'))\vert=2[1+o(1)] \sqrt{(n_{1}+n_{2})p}.$
Let
$$A_{1}'=\begin{pmatrix} A_{1}&0\\
                           0&0
            \end{pmatrix},\; A=A(G(n_{1},n_{2},p))=\begin{pmatrix} 0&A_{2}\\
                            A_{2}^{T}&0
            \end{pmatrix},\; A_{3}'=\begin{pmatrix} 0&0\\
                            0&A_{3}
            \end{pmatrix}.$$
By the assumptions on $p$, we see that $p$ satisfies the requirements of Lemma \ref{lem: eigenvalue of ER random graph} for $G(n_{1},p)$ and $G(n_{2},p)$, so that with probability tending to $1$ as $n_{1}$ (and hence $n_{2}\geq n_{1}$) tends to infinity: $$\vert\max_{i\neq 1}\mu_{i}(A_{1})\vert =2[1+o(1)]\sqrt{n_{1}p},\;\vert\max_{i\neq 1}\mu_{i}(A_{3})\vert =2[1+o(1)]\sqrt{n_{2}p};$$ this clearly holds for $A_{1}', A_{3}'$ also.
Hence $$\mu_{1}(-A_{1}')=-\mu_{n_{1}+n_{2}}(A_{1}')\leq 2 [1+o(1)] \sqrt{n_{1}p},$$ and similarly $\mu_{1}(-A_{3}')\leq 2[1+o(1)]\sqrt{n_{2}p}.$
Therefore, by Weyl's inequality, with probability tending to $1$ as $n_{1}$ tends to infinity:
\begin{equation*}
\begin{split}
   \mu_{2}(A) &=  \mu_{2}\bigg(A(G')- A_{1}'-A_{3}'\bigg)\leq \mu_{2}(A(G'))+\mu_{1}(-A_{1}')+\mu_{1}(-A_{3}')\\
    &\leq 2[1+o(1)]\bigg(\sqrt{(n_{1}+n_{2})p}+\sqrt{n_{1}p}+\sqrt{n_{2}p}\bigg).
\end{split}
\end{equation*}
Furthermore, by a routine application of Chebyshev's inequality, $G$ is almost $(n_{2}p,n_{1}p)$-regular with probability tending to $1$ as $n_{1}$ tends to infinity,  i.e. the minimum and maximum degree of vertices in $V_{1}(G)$ are $(1+o(1))n_{2}p$ and the minimum and maximum degree of vertices in $V_{2}(G)$ are $(1+o(1))n_{1}p$. Therefore, we see that there exists a matrix $K$ with norm $\vert\vert K\vert\vert_{\infty}=o\bigg(1\bigg\slash \sqrt{n_{1}n_{2}p^{2}}\bigg)$ such that $$\frac{1}{\sqrt{n_{1}n_{2}p^{2}}}A=D^{-1\slash 2}AD^{-1\slash 2}+K.$$
Since $\max_{j}\vert \mu_{j}(K)\vert \leq \vert \vert K\vert \vert_{\infty}$, we see that for $i=1,\hdots ,n_{1}+n_{2}$: $$\mu_{i}\bigg(\frac{1}{\sqrt{n_{1}n_{2}p^{2}}}A\bigg)=\mu_{i}\bigg(D^{-1\slash 2}AD^{-1\slash 2}\bigg)+o\bigg(\frac{1}{\sqrt{n_{1}n_{2}p^{2}}}\bigg).$$
As $1-\mu_ {i}(G)=\mu_{n_{1}+n_{2}-i+1}(D^{-1\slash 2}AD^{-1\slash 2})$ for $i=1,\hdots ,n_{1}+n_{2}$, the result follows.
\end{proof}

	\bibliographystyle{alpha}
	\bibliography{bib}
	{\sc{DPMMS, Centre for Mathematical Sciences, Wilberforce Road, Cambridge, CB3 0WB, UK}}\\
	\emph{E-mail address}: cja59@dpmms.cam.ac.uk
\end{document}